\newtheorem{theorem}{Theorem}
\newtheorem{corollary}[theorem]{Corollary}
\newtheorem{lemma}[theorem]{Lemma}
\newenvironment{proof}[1][Proof]{\textbf{#1.} }{\ \rule{0.5em}{0.5em}}
\numberwithin{equation}{section}
\numberwithin{theorem}{section}
\begin{document}

\date{}

\title{ Some remarks on invariant subspaces in real Banach spaces (revised version)}

\author{ \fbox{V.I. Lomonosov} and V.S. Shulman}

\maketitle

\medskip

\medskip

{\bf Abstract.}

\medskip

It is proved that a commutative algebra $A$ of operators on a reflexive real Banach space has an invariant subspace if each operator $T\in A$ satisfies the condition
$$\|1- \varepsilon T^2\|_e \le 1 + o(\varepsilon) \text{ when  } \varepsilon\searrow 0,$$
where $\|\cdot\|_e$ is the essential norm.
This implies the existence of an invariant subspace for every commutative family of essentially selfadjoint operators on a real Hilbert space.

\medskip

\medskip

\section{Introduction }

{\bf Notations}: for a complex or real Banach space $X$ we denote by $\mathcal{B}(X)$ the algebra of all bounded linear operators on $X$; its ideals of all compact and all finite rank operators are denoted by $\mathcal{K}(X)$ and $\mathcal{F}(X)$ respectively.  The quotient $\mathcal{C}(X)= \cal{B}(X)/\cal{K}(X)$ is called the Calkin algebra; the standard epimorphism of $\mathcal{B}(X)$ onto $\mathcal{C}(X)$ is denoted by $\pi$.  The essential norm $\|T\|_e$ of $T\in \mathcal{B}(X)$ is the norm of $\pi(T)$ in $\mathcal{C}(X)$. In other words,
 $$\|T\|_e = \inf\{\|T+K\|: K\in \mathcal{K}(X)\}.$$

One of the most known unsolved problems in the invariant subspace theory in Hilbert spaces is the  existence of a (non-trivial, closed) invariant subspace for an operator $T$ with compact imaginary part; such operators are characterized by the condition $T^*-T \in \mathcal{K}(X)$ and usually called {\it essentially selfadjoint}. It is difficult to list all the papers devoted to this subject; we only mention that the answer is affirmative if  $T-T^*$ belongs to some Shatten - von Neumann class $\mathfrak{S}_p$ (Livshits \cite{MSL} for $p=1$, Sahnovich    \cite{Sah} for $p=2$, Gohberg and Krein  \cite{GK}, Macaev  \cite{M1}, Schwartz \cite{Sch} --- for arbitrary $p$), or, more generally, to the {\it Macaev ideal} (Macaev  \cite{M2} ). But the general problem is still open.  See the review \cite[Section 3]{LS2} for more information.

It was proved in \cite{Lom2} that every essentially self-adjoint operator on a complex Hilbert space has an invariant {\it real} subspace. Then in \cite{Lom3} the following  general theorem of  Burnside type was proved:
\begin{theorem}\label{L1}
  Let an algebra $A$  of operators on a real or complex Banach space  $X$  be non dense in $\mathcal{B}(X)$ with respect to the weak operator topology (WOT). Then there are non-zero $x\in X^{**}, y\in X^*$, such that
\begin{equation}\label{ineq}
|(x,T^*y)| \le \|T\|_e , \text{ for all } T\in  A.
\end{equation}
\end{theorem}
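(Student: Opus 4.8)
My plan is to follow the Lomonosov scheme: reduce the statement to a separation in the weak operator topology, then upgrade the resulting finite-rank annihilating functional to a rank-one functional on the bidual by a fixed-point argument, the essential norm entering exactly where a limiting process discards its compact part.

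\textbf{Step 1 (separation).} I would first pass to the WOT-closure of $A$, which is again an algebra and, by hypothesis, a proper WOT-closed subalgebra of $\mathcal B(X)$; since the asserted inequality for the larger algebra implies it for $A$, I may assume $A$ is WOT-closed with $A\ne\mathcal B(X)$. The dual of $(\mathcal B(X),\mathrm{WOT})$ is the space $\mathcal F(X)$ of finite-rank operators, paired with $\mathcal B(X)$ by the trace form $\langle T,F\rangle=\operatorname{tr}(TF)$, under which a rank-one operator $f\otimes x$ (with $x\in X$, $f\in X^*$) corresponds to the functional $T\mapsto(Tx,f)$. Since $\mathcal F(X)$ is WOT-dense in $\mathcal B(X)$ while $A$ is a proper WOT-closed subspace, Hahn--Banach yields a nonzero $F\in\mathcal F(X)$ with $\operatorname{tr}(TF)=0$ for every $T\in A$; fixing some $S_0\notin A$, I may also require $\operatorname{tr}(S_0F)=1$.

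\textbf{Step 2 (from finite rank to the bidual).} The heart of the matter is to convert the vanishing of this generally higher-rank trace functional into the single inequality $|(x,T^*y)|\le\|T\|_e$ for one nonzero pair $x\in X^{**}$, $y\in X^*$. Note that $\|T\|$ cannot replace $\|T\|_e$ here, or the statement would be vacuous (since $|(x,T^*y)|\le\|x\|\,\|T\|\,\|y\|$ always holds), and that the right-hand side cannot in general be taken to be $0$, that being equivalent to $A^*=\{T^*:T\in A\}$ possessing a non-cyclic vector in $X^*$. I would argue by contradiction: assuming that for every nonzero $x\in X^{**}$ and $y\in X^*$ there is $T\in A$ with $|(x,T^*y)|>\|T\|_e$, homogeneity together with compact corrections (which leave $\|T\|_e$ unchanged) produces operators in $A$ that are, on the relevant vectors, simultaneously of norm $<1$ and ``expanding'' by a factor $>1$. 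Plugging these into the Schauder--Tychonoff fixed point theorem, applied to the convex weak-$*$-compact product of the unit balls of $X^{**}$ and $X^*$ with a self-map built from such an operator (to move the vectors) followed by a fixed finite-rank operator extracted from $F$ (to return them to the ball), and then using $A\cdot A\subseteq A$ to analyse the fixed-point identity, should force the finite-rank functional produced from the fixed point to vanish on all of $\mathcal B(X)$ --- contradicting that it is nonzero on $S_0$.

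\textbf{Step 3 (extraction), and the main obstacle.} Negating the contradiction, the sought $x\in X^{**}$ appears as a weak-$*$ cluster point in the ball of the bidual (its non-triviality resting on Goldstine's theorem) and $y\in X^*$ likewise, with $(x,T^*y)=(T^{**}x,y)$ meaningful because $T^{**}=(T^*)^*$ is weak-$*$--weak-$*$ continuous. I expect the main obstacle to lie exactly here: the bilinear form $(x,y)\mapsto(T^{**}x,y)$ on the product of balls is only separately, not jointly, weak-$*$ continuous, so the inequality $|(x,T^*y)|\le\|T\|_e$ need not survive a weak-$*$ limit. The resolution --- and the precise point at which the essential norm is both necessary and available --- is that the obstruction to joint continuity is carried by the compact part of the operators involved, which is invisible to $\|\cdot\|_e$, so the limiting inequality does survive, exactly at the level of essential norms. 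The residual technicalities are keeping the produced pair nonzero throughout the limiting process and, when $X$ is non-reflexive, checking that $x$ genuinely lands in $X^{**}$.
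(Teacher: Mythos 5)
Your proposal is only a strategy sketch, and its central step does not match any mechanism that would actually close the argument. The paper itself does not reprove Theorem \ref{L1} (it cites \cite{Lom3}); its own detailed argument for the refinement, Theorem \ref{L4N}, uses the relevant technique, and comparing with that proof exposes two concrete gaps in your plan. First, your Step 2 is a black box: you assert that a fixed point of a map built on the product of the unit balls of $X^{**}$ and $X^*$ ``should force the finite-rank functional produced from the fixed point to vanish on all of $\mathcal{B}(X)$'', but no mechanism is offered, and this is not what the fixed-point argument delivers. In the actual proof one works only on a weak*-compact ball $S=\{y:\|y-y_0\|\le 2\}$ in $X^*$ (centered away from the origin, so the fixed point is automatically nonzero), covers $S$ by weak*-open sets $V_y$ on which $T_y^*$ maps into $S$ --- this is where the decomposition $T_y=R_y+K_y$ with $\|R_y\|$ small and $K_y$ compact is used --- and glues the maps with a partition of unity; Tychonoff's theorem then gives $Mz=z$ with $M$ a convex combination of the $T^*_{y_i}$, i.e. an element of $A^*$ having the eigenvalue $1>\|M\|_e$. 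The payoff is Lemma \ref{ComEss} (Riesz projection via the punctured neighborhood theorem): $A$ contains a nonzero finite-rank operator, and the contradiction with WOT-non-density is then reached through Burnside-type results (Barnes' theorem in the complex case; in the real case extra work is needed, which is exactly where case (b) of Theorem \ref{L4} comes from). Your trace-duality functional $F$ and the claim that it would vanish identically play no role and are not forced by anything you describe.

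Second, your Step 3 rests on a misconception that you yourself flag as ``the main obstacle'': in the actual argument the pair $(x,y)$ is not obtained as a weak* cluster point of any limiting process, and the inequality is never passed through a weak* limit, so the failure of joint weak* continuity --- which your remark that ``the obstruction is carried by the compact part'' does not overcome --- never has to be confronted. The functional comes from the other horn of a dichotomy: either the orbit $Fz=\{T^*z:\ T\in A,\ \|T\|_e<1\}$ is dense in $X^*$ for every nonzero $z$ (the case disposed of by the fixed point above), or some such orbit is not dense, and then a Hahn--Banach separation of the closed convex balanced set $\overline{Fz}$ --- in the paper, the Bishop--Phelps theorem, in order to get a support point of $\overline{Fz}$ itself and hence the stronger resolving inequality with the factor $(x,y)$ --- produces nonzero $x\in X^{**}$, $y\in X^*$ with $|(x,T^*y)|\le\|T\|_e$ for all $T\in A$. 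The essential norm enters because the orbit is built only from operators with $\|T\|_e<1$, not because a limit discards compact parts. So the missing ideas are precisely this density dichotomy for the orbits $\{T^*z:\|T\|_e<1\}$ and the eigenvalue-versus-essential-norm step feeding into Riesz theory; without them your Steps 2--3 do not assemble into a proof.
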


\medskip

Using this result and developing a special variational techniques, Simonic \cite{Sc} has obtained a significant progress in the topic: he proved that each essentially selfadjoint operator on a {\it real} Hilbert space has invariant subspace. Deep results based on Theorem  \ref{L1} and  variational methods were established then in papers of Atzmon \cite{A},  Atzmon, Godefroy \cite{AG}, Atzmon, Godefroy, Kalton \cite{AGK}, Grivaux \cite{SofGr} and other mathematicians. Here we will show that every commutative family of essentially selfadjoint operators on a real Hilbert space has an invariant subspace, and consider some analogs of this result for operators on Banach spaces. Our proof is very simple and short --- modulo Theorem \ref{L1}. More precisely, we use an improvement of Theorem \ref{L1} proved in \cite{Lom4} for the case of complex Banach spaces. To formulate it, we introduce some notation.

For a pair of non-zero vectors  $x\in X^{**}, y\in X^*$, let us denote by $\omega_{x,y}$ the "vector functional"\; on $\mathcal{B}(X)$, acting by the rule
$$\omega_{x,y}(T) = (x,T^*y).$$
 We say that a vector functional $\omega_{x,y}$ is {\it resolving} for an algebra $A\subset \mathcal{B}(X)$, if
 \begin{equation}\label{ineq2}
|(x,T^*y)| \le \|T\|_e (x,y), \text{ for all } T\in A.
\end{equation}
 \begin{theorem}\label{L4}
 Let a Banach algebra $A\subset \mathcal{B}(X)$  on a Banach space $X$ over $\mathbb{K}\in \{\mathbb{R}, \mathbb{C}\}$ is not WOT-dense in  $\mathcal{B}(X)$. Then at least one of the following two conditions holds:

 a) $A$ has a resolving vector functional,

b) $\mathbb{K} = \mathbb{R}$  and $A$ contains an idempotent of finite rank.
\end{theorem}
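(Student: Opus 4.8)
The plan is to start from Theorem \ref{L1}, which already yields non-zero $x\in X^{**}$, $y\in X^*$ with $|(x,T^*y)|\le\|T\|_e$ for all $T\in A$. The task is to upgrade the right-hand side from $\|T\|_e$ to $\|T\|_e(x,y)$, i.e.\ to arrange that $(x,y)>0$ (after which one may rescale $x$ so that $(x,y)=1$ and the inequality \eqref{ineq2} becomes \eqref{ineq} up to normalization), or else to locate a finite-rank idempotent in $A$ in the real case. So the first step is to examine the set $\Omega$ of all pairs $(x,y)$ for which the weak inequality \eqref{ineq} of Theorem \ref{L1} holds; this set is nonempty, and it is natural to look at it as a subset of $X^{**}\times X^*$ equipped with the product of the $\mathrm{weak}^*$-topologies, intersected with the unit sphere (or bidisc) in each coordinate, so that compactness is available.

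\textbf{Forcing the pairing to be positive.} The heart of the matter is to show that, unless something degenerate happens, one can choose $(x,y)\in\Omega$ with $(x,y)\neq 0$; then a scalar multiple of $(x,y)$ makes the pairing positive (in the real case one may need to replace $x$ by $-x$) and \eqref{ineq2} follows by homogeneity. I would argue by contradiction: suppose every pair in $\Omega$ satisfies $(x,y)=0$. The key trick should be to exploit that $\Omega$ is invariant under the natural actions of $A$ on both sides --- if $(x,y)\in\Omega$ and $S\in A$, then, using commutativity (or at least the algebra structure) and submultiplicativity of $\|\cdot\|_e$, the pairs $(S^{**}x,y)$ and $(x,S^*y)$ again belong to $\Omega$ up to normalization. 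Iterating, one gets $(S^{**}x,y)=0$ for all $S\in A$, hence $(x,S^{**}T^{**}y\ \text{-type expressions})=0$, which forces $x$ to annihilate the WOT-closed subspace generated by $Ay$ --- and an analysis of this subspace is exactly where the dichotomy enters.

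\textbf{Where the dichotomy comes from.} The WOT-closure $\overline{Ay+\mathbb{K}y}$ (or the closed $A$-submodule of $X^*$ generated by $y$) is a nonzero closed subspace $Y\subseteq X^*$ that is invariant under $A^*$. If $Y\neq X^*$ we contradict the hypothesis that $A$ is not WOT-dense applied on the dual side (or we already have an invariant subspace, which for the purposes of the deduction in this paper is the desired conclusion); if $Y=X^*$ then $x$ annihilates all of $X^*$, so $x=0$, again a contradiction. The only way to escape is that the orbit map degenerates --- i.e.\ $Ay$ is finite-dimensional --- and that is precisely the situation producing a finite-rank idempotent: the algebra $A$ acts on the finite-dimensional space $Ay+\mathbb{K}y$, and by the classical Burnside theorem this action is either trivial (giving an invariant subspace, hence a resolving functional by finite-dimensional linear algebra) or irreducible over $\mathbb{K}$; irreducibility over $\mathbb{R}$ but not over $\mathbb{C}$ is exactly the obstruction that forbids a resolving functional, and in that case a spectral projection associated with a complex eigenvalue gives a finite-rank idempotent in (the closure of) $A$.

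\textbf{The main obstacle.} The delicate point I expect to fight with is the $\mathrm{weak}^*$-continuity and boundedness bookkeeping needed to pass to limits: the pairs $(S^{**}x,y)$ must be renormalized to stay in a fixed bounded set before taking $\mathrm{weak}^*$-cluster points, and one has to ensure the limit is still nonzero and still lies in $\Omega$ --- the essential norm $\|\cdot\|_e$ is only lower semicontinuous in the relevant topologies, so the inequality \eqref{ineq} survives limits, but nondegeneracy of the limit pairing requires a quantitative lower bound that does not collapse. Handling the real scalar field carefully (no complex eigenvalues to split off directly, so one must work with $2\times2$ real blocks) is the second source of friction, and it is exactly what makes case (b) genuinely necessary rather than an artifact of the proof.
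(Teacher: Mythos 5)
The core step of your plan does not work. The resolving condition \eqref{ineq2} is positively homogeneous in $x$ (and in $y$) separately, so whether $\omega_{x,y}$ is resolving is unchanged by rescaling; consequently a pair satisfying the Theorem \ref{L1} bound \eqref{ineq} with $(x,y)>0$ cannot be ``normalized'' into a resolving functional. Concretely, if $|(x,T^*y)|\le\|T\|_e$ for all $T\in A$ and $(x,y)=c\in(0,1)$, then replacing $x$ by $x/c$ gives $(x/c,y)=1$ but only $|(x/c,T^*y)|\le\|T\|_e/c$, and the resolving inequality for the rescaled pair is equivalent to $|(x,T^*y)|\le c\,\|T\|_e$, which Theorem \ref{L1} does not provide. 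The factor $(x,y)$ on the right-hand side is exactly the strengthening that separates Theorem \ref{L4} from Theorem \ref{L1}, and the paper does not obtain it from \ref{L1} at all: it proves Theorem \ref{L4N} directly, by noting that the set $F=\{T^*: T\in A,\ \|T\|_e<1\}$ satisfies $F\cdot F\subset F$, and, when some orbit closure $V=\overline{Fz}$ is a proper convex subset of $X^*$, applying the Bishop--Phelps theorem to get a support point $y\in V$ with supporting functional $x\in X^{**}$; since $Fy\subset V$, one gets $(x,T^*y)\le(x,y)$ for $T^*\in F$, and homogeneity in $T$ then yields precisely \eqref{ineq2}. Your orbit-invariance and weak*-cluster-point bookkeeping never produces this multiplicative structure, so case a) is not established by your argument.

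The mechanism for case b) is also missing. In your dichotomy, the branch $Y=\overline{A^*y+\mathbb{K}y}\neq X^*$ does not ``contradict non-WOT-density on the dual side''; it is in fact the favorable case (any $y'\in Y$ and $x'\in X^{**}$ annihilating $Y$ give a degenerate resolving functional), so finite-dimensionality of the orbit never arises as a forced alternative, and even if it did, restricting $A^*$ to a finite-dimensional invariant subspace of $X^*$ does not place any finite-rank operator, let alone an idempotent or a spectral projection, inside $A$ itself. In the paper the finite-rank element of $A$ comes from a completely different source: if all orbits $Fz$ are dense, a partition-of-unity plus Schauder--Tychonoff fixed-point argument on a weak*-compact ball of $X^*$ produces a convex combination $M\in\varepsilon F$ with eigenvalue $1>\|M\|_e$, and Lemma \ref{ComEss} (Riesz projection for an eigenvalue beyond the essential spectral radius, with a complexification trick in the real case) puts a non-zero finite-rank operator into the norm-closed algebra. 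Theorem \ref{L4} is then deduced: no resolving functional forces transitivity, which in the complex case contradicts non-density by the Barnes/Lomonosov theorem, while in the real case the compression $T_0AT_0$ restricted to $T_0X$ is an irreducible algebra on a finite-dimensional real space, and the classification $M_n(\mathbb{R})$, $M_{n/2}(\mathbb{R})\otimes\mathbb{C}$, $M_{n/4}(\mathbb{R})\otimes\mathbb{H}$ supplies the finite-rank idempotent. None of these ingredients appears in your proposal, so the argument as sketched cannot be completed.
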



  As it was noted above, the "complex part"\; of Theorem \ref{L4} was proved in \cite{Lom4}. In fact a part of the proof in \cite{Lom4} worked only for reflexive $X$; in the process of our proof we correct  this inaccuracy, applying the Bishop-Phelps theorem.

  Here Theorem \ref{L4} will be deduced from the following statement:

  \begin{theorem}\label{L4N}
 Let $X$ be a real or complex Banach space. If a norm-closed algebra $A\subset \mathcal{B}(X)$
     does not contain non-zero finite rank operators, then it has a resolving vector functional.
  \end{theorem}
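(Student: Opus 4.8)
The plan is to reduce to the Burnside-type Theorem~\ref{L1} and then renormalise the vector functional it produces; along the way one replaces a tacit use of reflexivity in \cite{Lom4} by the Bishop--Phelps theorem.

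First I would dispose of the trivial configurations. If $A$ has a non-trivial closed invariant subspace $Y$, pick $y\in Y^{\perp}\setminus\{0\}$; since $Y^{\perp}$ is $A^{*}$-invariant, the closed span $M$ of $\{y\}\cup\{T^{*}y:T\in A\}$ is a proper subspace of $X^{*}$, so some non-zero $x\in X^{**}$ annihilates $M$, and then $(x,T^{*}y)=0=\|T\|_{e}(x,y)$ for every $T\in A$, i.e.\ $\omega_{x,y}$ is a (degenerate) resolving functional. So one may assume $A$ is transitive. Next, the hypothesis actually rules out non-zero \emph{compact} operators in $A$: a non-zero compact $K\in A$ has a non-zero isolated point of its spectrum, and the corresponding Riesz idempotent is a holomorphic function of $K$ vanishing at $0$, hence lies in the norm-closed algebra generated by $K$, hence in $A$, and is a non-zero finite rank operator --- a contradiction. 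Thus $\pi$ is injective on $A$ and $\|\cdot\|_{e}$ is an algebra norm there.

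Since the target inequality is homogeneous in $x$, the functional $\omega_{x,y}$ is resolving precisely when, after rescaling $x$, one has $|(x,T^{*}y)|\le\|T\|_{e}$ for all $T\in A$ \emph{together with} $(x,y)\ge1$. Theorem~\ref{L1}, available once $A$ is known not to be WOT-dense, supplies non-zero $x_{0}\in X^{**}$, $y_{0}\in X^{*}$ with $|(x_{0},T^{*}y_{0})|\le\|T\|_{e}$ for all $T\in A$; a unimodular twist of $y_{0}$ makes $\alpha:=(x_{0},y_{0})\in[0,\infty)$, and we are done if $\alpha\ge1$. (The genuinely WOT-dense transitive case does occur --- e.g.\ a simple $C^{*}$-algebra in an irreducible representation --- and is not covered by Theorem~\ref{L1}; it must be handled separately. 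I concentrate on the principal case below.) To force $\alpha\ge1$, set
\[
c^{*}=\sup\{\operatorname{Re}(x,y):\ y\in X^{*},\ \|y\|\le1,\ x\in X^{**},\ |(x,T^{*}y)|\le\|T\|_{e}\ \text{for all }T\in A\}.
\]
If $c^{*}=\infty$ we finish by rescaling. If $c^{*}<\infty$, for each $y$ the admissible set of $x$ is convex, balanced and weak$^{*}$-closed, and a weak$^{*}$-compactness argument over $\|y\|\le1$ --- combined with the Bishop--Phelps theorem, which furnishes the norm-attainment taken for granted only for reflexive $X$ in \cite{Lom4} --- yields an extremal pair $(\bar x,\bar y)$ with $(\bar x,\bar y)=c^{*}$. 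It then remains to show $c^{*}\ge1$, after which $\omega_{\bar x,\bar y}$ is the desired resolving functional.

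The heart of the argument, and the step I expect to be the main obstacle, is the inequality $c^{*}\ge1$. If $c^{*}<1$, optimality at $(\bar x,\bar y)$ means the objective admits no admissible increment; writing this out --- $\bar y$ is an approximate convex combination of the ``gradients'' $T^{*}\bar y$ of the active constraints $|(\bar x,T^{*}\bar y)|=\|T\|_{e}$, $T\in A$ --- and pushing it through the weak$^{*}$-compactness of the unit ball of the Calkin algebra, one should extract a non-zero finite rank operator lying \emph{inside} $A$, contradicting the hypothesis. The delicate part is exactly this: keeping the extracted operator in $A$ itself (not merely in its weak$^{*}$-closure or an ambient algebra) and controlling the essential-norm constraints through the limiting procedure --- which is where both the Bishop--Phelps theorem and the absence of compact operators in $A$ are used.
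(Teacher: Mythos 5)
Your proposal does not reach a proof. The decisive step --- showing $c^{*}\ge 1$, i.e.\ extracting a non-zero finite rank operator \emph{in $A$} from the failure of the optimization --- is exactly the content of the theorem, and you offer only a hope (``one should extract \dots'') with no mechanism for it; a Lagrange-multiplier picture of ``active constraints'' in the Calkin algebra gives no route to an operator of finite rank, let alone one lying in $A$. The paper does not argue this way at all: it works with the set $F=\{T^{*}:T\in A,\ \|T\|_{e}<1\}$ and splits into two cases. If $Fy$ is dense in $X^{*}$ for every $y\neq0$, a Schauder--Tychonoff fixed-point argument on a weak$^{*}$-compact ball (using the compact parts $K_{y}$ of operators $T_{y}=R_{y}+K_{y}$ with $\|R_{y}\|<\varepsilon$) produces $M\in\varepsilon F$ with eigenvalue $1>\|M\|_{e}$, and then Lemma~\ref{ComEss} (punctured-neighbourhood theorem plus Riesz projection, adapted to real scalars by complexification) puts a non-zero finite rank operator into $A^{*}$, hence into $A$ --- contradicting the hypothesis. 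If instead $Fz$ is not dense for some $z$, Bishop--Phelps applied to $V=\overline{Fz}$ gives a support pair $(x,y)$, and the multiplicativity $F^{2}\subset F$ turns the support inequality directly into $(x,T^{*}y)\le\|T\|_{e}(x,y)$. Your plan of ``reduce to Theorem~\ref{L1} and renormalise'' cannot substitute for this, also because Theorem~\ref{L1} only bounds $|(x,T^{*}y)|$ by $\|T\|_{e}$ with no control whatsoever on $(x,y)$.

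Two further concrete defects. First, Theorem~\ref{L4N} has \emph{no} non-WOT-density hypothesis, and you explicitly set aside the WOT-dense transitive case; so even granting everything else, you prove a strictly weaker statement (and this generality matters: Theorem~\ref{L4} and Corollary~\ref{complex} are derived from \ref{L4N}, not the other way round). Second, your claim that the hypothesis rules out non-zero compact operators in $A$ is false: a non-zero compact operator need not have a non-zero point in its spectrum (it can be quasinilpotent, e.g.\ the Volterra operator), so the Riesz-idempotent argument does not apply and the norm-closed algebra it generates contains no finite rank operators while consisting of compact ones; consequently $\pi$ need not be injective on $A$, and the ``absence of compact operators'' you invoke at the end is unavailable.
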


\section{Preliminary results}

\begin{lemma}\label{squares} Let $A$ be a commutative unital algebra of operators on a real Banach space $X$. Suppose that there are non-zero $x_0\in X$, $y_0\in X^*$ such  that
$$(T^2x_0,y_0)\ge 0, \text{ for all }T\in A.$$
Then $A$ has a (closed non-trivial) invariant subspace.
\end{lemma}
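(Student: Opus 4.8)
The plan is to argue by contradiction, assuming that $A$ has no non-trivial closed invariant subspace. This assumption is equivalent to saying that every non-zero vector of $X$ is cyclic for $A$ (for $0\ne x\in X$ the space $\overline{Ax}$ contains $x$ and is $A$-invariant, hence must be $X$). I would first replace $A$ by its norm closure $\overline A$, a commutative unital Banach algebra which still satisfies the hypothesis because $T\mapsto(T^2x_0,y_0)$ is norm-continuous, and for which it suffices to produce an invariant subspace. If $\overline A$ contains a non-zero operator $F$ of finite rank, then $\ker F$ is a closed $A$-invariant subspace (as $F$ commutes with $A$), it is proper, and it is non-zero once $\dim X=\infty$ — a contradiction; the finite-dimensional case is settled separately, the only issue there being a field $A\cong\mathbb C$ acting on $\mathbb R^2$, which the hypothesis excludes. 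So we may assume $\dim X=\infty$ and that $\overline A$ has no non-zero finite-rank operators.

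The main structural tool is the bilinear form $\beta(S,T)=(STx_0,y_0)$ on $\overline A$: it is symmetric by commutativity and positive semidefinite by hypothesis, so the Cauchy--Schwarz inequality holds, and its radical $\mathcal N=\{T:\beta(T,T)=0\}=\{T:\beta(T,\cdot)\equiv0\}$ is a closed ideal (since $\beta(RT,RT)=\beta(R^2T,T)\le\beta(R^2T,R^2T)^{1/2}\beta(T,T)^{1/2}=0$ for $T\in\mathcal N$). Now $\overline{\mathcal N x_0}$ is a closed $A$-invariant subspace, and it is proper because $(Tx_0,y_0)=\beta(T,1)=0$ for $T\in\mathcal N$, so $y_0\ne0$ is orthogonal to it; hence $\mathcal N x_0=0$, and then $\bigcap_{T\in\mathcal N}\ker T$ is a closed $A$-invariant subspace containing $x_0$, which forces it to equal $X$ and therefore $\mathcal N=0$. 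A similar argument with $T$ replaced by $T+s\cdot 1$ (using the non-negativity of $s\mapsto((T+s)^2x_0,y_0)$) gives $(x_0,y_0)>0$, which I normalise to $1$. After these reductions $\tau(T):=(Tx_0,y_0)$ is a bounded linear functional on $\overline A$ with $\tau(1)=1$ and $\tau(T^2)>0$ for every $T\ne0$, and in particular $T\mapsto Tx_0$ is injective on $\overline A$.

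For each $T\in\overline A$ the Hankel matrix $(\tau(T^{i+j}))_{i,j\ge0}=(\beta(T^i,T^j))_{i,j}$ is positive semidefinite, so the Hamburger moment problem furnishes a probability measure $\nu_T$ on $\mathbb R$ with $\tau(T^k)=\int t^k\,d\nu_T$; the bound $\int t^{2k}\,d\nu_T=\tau(T^{2k})\le\|\tau\|\,\|T^{2k}\|$ together with $\|T^{2k}\|^{1/2k}\to\rho(T)$ shows that $\operatorname{supp}\nu_T$ is compact. If $\operatorname{supp}\nu_T=\{t_0\}$, then $\nu_T=\delta_{t_0}$ and $\tau((T-t_0)^2)=\tau(T^2)-2t_0\tau(T)+t_0^2=0$, whence $T=t_0\cdot1$. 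Thus — the case $\overline A=\mathbb R\cdot1$ being trivial since $\dim X\ge2$ — we may fix a non-scalar $T\in\overline A$ whose moment measure $\nu_T$ has at least two points in its support.

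The remaining step, manufacturing an invariant subspace of $X$ from this, is where the real work lies. If the spectrum $\sigma(T)$ is disconnected, the associated Riesz idempotent $P$ lies in $\overline A$ by the holomorphic functional calculus, is neither $0$ nor $1$, commutes with $A$, and $\ker P$ is the required subspace. In the remaining case (every element of $\overline A$ has connected spectrum) I would invoke Theorem~\ref{L4N}: since $\overline A$ has no non-zero finite-rank operators it admits a resolving vector functional, and one plays this off against the positivity $\tau(T^2)\ge0$ and the measure $\nu_T$ to contradict the consequence of our standing assumption that every non-zero element of $\overline A$ is injective with dense range. The obstacle, and the delicate point, is precisely this transfer: the GNS Hilbert space $L^2(\nu_T)$ attached to $\tau$ carries all the reducing subspaces one wants for the multiplication operator, but its inner product is only dominated by — not equivalent to — the operator norm on $\overline A$, so those subspaces do not automatically descend to $X$, and bridging that gap is what Theorem~\ref{L4N} must supply.
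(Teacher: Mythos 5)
There is a genuine gap: your argument never closes. All the reductions (passing to $\overline A$, disposing of finite-rank operators, showing the radical of $\beta$ is trivial, getting $(x_0,y_0)>0$, producing the compactly supported moment measures $\nu_T$) are sound, but the decisive step --- turning a non-scalar $T$ with $\operatorname{supp}\nu_T$ containing two points into an invariant subspace of $X$ --- is exactly what you leave open, as you yourself say (``where the real work lies'', ``bridging that gap is what Theorem~\ref{L4N} must supply''). The proposed bridge is not workable as described: a resolving functional only gives $|(x,T^*y)|\le\|T\|_e\,(x,y)$, and Lemma~\ref{squares} carries no hypothesis relating the operators of $A$ to their essential norms, so there is nothing to ``play off'' against the positivity of $\tau$; in the paper's architecture the implication runs in the opposite direction (Theorem~\ref{L4N} together with essential reality is used to \emph{produce} the hypothesis of this lemma, applied to $A^*$), so invoking it inside the lemma cannot supply the missing transfer from $L^2(\nu_T)$ back to $X$. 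A secondary flaw: over $\mathbb{R}$, disconnectedness of the (complexified) spectrum of $T$ does not by itself put a Riesz idempotent into $\overline A$ --- the spectral set must be conjugation-symmetric (for $\sigma(T)=\{i,-i\}$ the Riesz projections are not real operators), so even that branch needs repair.

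For comparison, the paper's proof is entirely elementary and uses the given pair $(x_0,y_0)$ directly: let $K$ be the closed convex hull of $\{T^2x_0:T\in A\}$; it is invariant under all $T^2$, nonzero (unitality) and proper (it lies in the half-space $(\cdot\,,y_0)\ge0$). After checking that $\partial K$ is not a singleton, the Bishop--Phelps theorem yields a nonzero support point $x_1\in K$ with a nonzero functional $y_1$ attaining its minimum over $K$ at $x_1$; since $T^2x_1\in K$, one has $(x_1,y_1)\le(T^2x_1,y_1)$, and substituting $1+\alpha T$ for $T$ gives $2\alpha(Tx_1,y_1)+\alpha^2(T^2x_1,y_1)\ge0$ for all real $\alpha$, forcing $(Tx_1,y_1)=0$ for all $T\in A$; hence $\overline{Ax_1}$ is the desired invariant subspace. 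No moment problem, no passage through Theorem~\ref{L4N}, is needed; the key idea you are missing is to replace $(x_0,y_0)$ by a support point of the convex set of squares and then linearize via the substitution $T\mapsto 1+\alpha T$.
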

\begin{proof} Let $K$ be the closed convex hull of the set
$$M = \{T^2x_0: T\in A\}\subset X.$$
Since $M$ is invariant under all operators $T^2$, $T\in A$, the same holds for $K$. Furthermore $K\neq X$, because $(z,y_0)\ge 0$, for all $z\in K$. Clearly $K \neq \{0\}$ since $A$ is unital. If $\partial K$ is a singleton $\{w_0\}$ then $w_0 =0$ because otherwise $tw_0\in \partial K$, for $t>0$. Let $w$ be a non-zero element of $K$ and let $q\in X$ be non-proportional to $w$ with $(q,y_0) \neq 0$. Then the line $w+Rq$ intersects $K$ in a point that does not belong to  $\partial K$, a contradiction.

So we may assume that $\partial K$ is not a singleton. By the Bishop-Phelps theorem \cite{BPh}, $K$ has non-zero support points. So there is a non-zero functional $y_1\in X^*$ attaining its minimum on $K$ at some non-zero point $x_1\in K$:
$$(x_1,y_1)\le (w,y_1), \text{ for all } w\in K.$$
Since $T^2x_1\in K$, for each $T\in A$, we get that
$$(x_1,y_1)\le (T^2x_1,y_1), \text{ for all } T\in A.$$
Replacing $T$ by $1+\alpha T$, $\alpha\in \mathbb{R},$
we get that
$$2\alpha(Tx_1,y_1) + \alpha^2(T^2x_1,y_1) \ge 0, \text{ for all }\alpha\in \mathbb{R}.$$
This means that $(Tx_1,y_1)= 0$, for $T\in A$, so the cyclic subspace $\overline{Ax_1}$ is a non-trivial invariant subspace for $A$.
\end{proof}

\begin{corollary} Let $T_1,T_2,...,T_n$ be commuting operators on a real Banach space $X$. If there is a positive measure $\mu$ on $\mathbf{R}^n$ and vectors $x_0\in X, y_0\in X^*$, such that
\begin{equation}\label{Atz}
(T_1^{m_1}T_2^{m_2}...T_n^{m_n}x_0,y_0) = \int_{\mathbb{R}^n}x_1^{m_1}x_2^{m_2}...x_n^{m_n}d\mu, \text{  for all  } m_i\in \mathbb{N},
\end{equation}
then operators $T_1,T_2,...,T_n$ have a common invariant subspace.
\end{corollary}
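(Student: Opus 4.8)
The plan is to reduce the Corollary to Lemma~\ref{squares} by constructing, from the commuting operators $T_1,\dots,T_n$ and the moment condition \eqref{Atz}, a commutative unital algebra $A$ together with vectors realizing the hypothesis of the Lemma. First I would let $A$ be the unital algebra generated by $T_1,\dots,T_n$; since the $T_i$ commute, $A$ is commutative, and every element of $A$ is a polynomial $p(T_1,\dots,T_n)$ with real coefficients (with constant term allowed). The goal is then to produce $x_0'\in X$, $y_0'\in X^*$ with $(S^2x_0',y_0')\ge 0$ for all $S\in A$.

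The key observation is that \eqref{Atz} says precisely that the functional $S=p(T_1,\dots,T_n)\mapsto (Sx_0,y_0)$ is given by integration of the corresponding polynomial $p(x_1,\dots,x_n)$ against the positive measure $\mu$ on $\mathbb{R}^n$, i.e.
$$(p(T_1,\dots,T_n)x_0,y_0)=\int_{\mathbb{R}^n}p(x_1,\dots,x_n)\,d\mu.$$
This identity extends from monomials to all real polynomials by linearity. Now take any $S\in A$, so $S=p(T_1,\dots,T_n)$; then $S^2=p^2(T_1,\dots,T_n)$ corresponds to the polynomial $p(x_1,\dots,x_n)^2\ge 0$ pointwise, hence
$$(S^2x_0,y_0)=\int_{\mathbb{R}^n}p(x_1,\dots,x_n)^2\,d\mu\ge 0$$
because $\mu$ is a positive measure. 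Thus the hypothesis of Lemma~\ref{squares} holds with $x_0,y_0$ themselves (assuming they are non-zero, which is part of the hypothesis), and the Lemma yields a closed non-trivial subspace invariant under $A$, in particular under each $T_i$; this is the desired common invariant subspace.

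One point that needs a word of care is whether the measure $\mu$ in \eqref{Atz} is assumed finite, and whether the integrals of the polynomials actually converge; implicitly \eqref{Atz} presupposes that all the monomial moments are finite, and any polynomial is a finite linear combination of monomials, so each integral $\int p\,d\mu$ and $\int p^2\,d\mu$ is finite and the manipulations are legitimate. I do not anticipate a genuine obstacle here: the statement is essentially a repackaging of Lemma~\ref{squares}, the only content being the elementary observation that a square of a real polynomial is nonnegative and that a positive measure integrates nonnegative functions to nonnegative numbers. The mild subtlety is matching the unital algebra in Lemma~\ref{squares} with the algebra generated by the $T_i$, and noting that passing to the unital algebra does not affect the existence of the invariant subspace (a subspace invariant under $A$ is automatically invariant under the $T_i$).
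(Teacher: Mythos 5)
Your proposal is correct and follows essentially the same route as the paper: extend the moment identity \eqref{Atz} from monomials to all real polynomials by linearity, observe that $(P(T_1,\dots,T_n)^2x_0,y_0)=\int P^2\,d\mu\ge 0$ since $\mu$ is positive, and apply Lemma~\ref{squares} to the (unital) algebra generated by $T_1,\dots,T_n$. Your added remarks on finiteness of moments and unitality are harmless refinements of the same argument.
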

\begin{proof}
Clearly $$(P(T_1,...,T_n)x_0,y_0) = \int_{\mathbb{R}^n}P(x_1,...,x_n)d\mu, \text{ for each polynomial }P \text{ on } \mathbb{R}^n.$$
It follows that $(P(T_1,...,T_n)^2x_0,y_0)\ge 0$, for each $P$, so the algebra $A$ generated by $T_1,...,T_n$ satisfies the assumptions of Lemma \ref{squares} and, therefore,   has invariant subspaces.
\end{proof}

\begin{lemma}
\label{ComEss} Let  $T$ be an
operator on a real or complex Banach space ${X}$. If $T$ has an eigenvalue $\lambda > \rho_e(T)$  then the
norm-closed algebra $A=A(T)$ generated by $T$ contains a
non-zero finite rank operator.
\end{lemma}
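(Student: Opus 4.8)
The idea is to produce the Riesz spectral projection associated with $\lambda$ and to show that it is a non-zero finite rank operator lying in $A(T)$.

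First I would reduce to the complex case. If $X$ is real, pass to the complexification $X_{\mathbb C}$ with a norm for which $\|S_{\mathbb C}\|=\|S\|$ for every real operator $S$; compact operators on $X$ then complexify to compact operators on $X_{\mathbb C}$, so $\rho_e(T_{\mathbb C})\le\rho_e(T)<\lambda$, and $\lambda$ is still an eigenvalue of $T_{\mathbb C}$. So assume $X$ complex. Since $\rho_e(T)=\rho(\pi(T))$, the region $\Omega=\{\mu\in\mathbb C:|\mu|>\rho_e(T)\}$ lies in the Fredholm domain of $T$ (Atkinson's theorem), is connected, and contains $\{|\mu|>\|T\|\}$ where $\mu I-T$ is invertible; hence $\mu I-T$ has index $0$ for every $\mu\in\Omega$. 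By the analytic Fredholm alternative, $\sigma(T)\cap\Omega$ is discrete and consists of eigenvalues of finite algebraic multiplicity. In particular $\lambda$ (note $\lambda>\rho_e(T)\ge 0$, so $\lambda\neq 0$) is an isolated point of $\sigma(T)$, and its Riesz projection
\[
P=\frac{1}{2\pi i}\int_\gamma(\mu I-T)^{-1}\,d\mu,
\]
over a small positively oriented circle $\gamma$ around $\lambda$ separating it from $\sigma(T)\setminus\{\lambda\}$, is a non-zero idempotent commuting with $T$, and $\mathrm{ran}\,P$ is finite-dimensional because $\lambda$ lies in the Fredholm domain of $T$. Thus $P$ is a non-zero finite rank operator.

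It remains to check that $P\in A(T)$. Because $\lambda$ is isolated in $\sigma(T)$ and $|\lambda|>\rho_e(T)$, after shrinking $\delta$ we may take $\gamma\subset\{0<|\mu-\lambda|<\delta\}\subset\Omega\setminus\sigma(T)$; the latter set is connected (a connected open subset of $\mathbb C$ with a discrete set removed) and contains $\{|\mu|>\|T\|\}$, so $\gamma$ lies in the unbounded component of $\mathbb C\setminus\sigma(T)$. By Runge's theorem, each resolvent $(\mu I-T)^{-1}$ with $\mu\in\gamma$ is then a norm limit of polynomials in $T$, hence so is $P$, being a norm-convergent limit of Riemann sums of the defining integral. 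Moreover $\gamma$ does not wind around $0$, so writing $(\mu I-T)^{-1}=\mu^{-1}I+\mu^{-1}T(\mu I-T)^{-1}$ gives $P=TS$ with $S=\frac{1}{2\pi i}\int_\gamma\mu^{-1}(\mu I-T)^{-1}\,d\mu$ again a norm limit of polynomials in $T$; hence $P$ is a norm limit of polynomials in $T$ with vanishing constant term, i.e.\ $P\in A(T)$. In the real case, $\lambda\in\mathbb R$ and $\sigma(T_{\mathbb C})$ is symmetric about $\mathbb R$, so $\gamma$ may be chosen conjugation-symmetric and the approximating polynomials taken real (average each with the conjugate polynomial); then $P$ is the complexification of a real finite rank idempotent in $A(T)$.

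The substantive step is the structure theory in the second paragraph: a spectral point of modulus exceeding $\rho_e(T)$ is automatically isolated and of finite algebraic multiplicity with a finite-rank Riesz projection. This is classical — stability of the Fredholm index together with the analytic Fredholm alternative — and once it is available the rest is bookkeeping, the only mildly delicate points being the passage to the complexification and the arrangement that the approximating polynomials have no constant term.
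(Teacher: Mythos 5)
Your proof is correct and follows essentially the same route as the paper: reduce to the complex case by complexification, use the punctured-neighbourhood/analytic-Fredholm argument to see that $\lambda$ is an isolated spectral point with a non-zero finite-rank Riesz projection $P$, show $P$ is a norm limit of polynomials in $T$, and in the real case recover a real finite-rank element of $A(T)$ from the complex approximants. You in fact supply details the paper states without proof --- that the resolvents on the contour lie in the norm-closed unital algebra generated by $T$ (via the unbounded component of the resolvent set), and the factorization $P=TS$ killing constant terms so that $P$ lies in the possibly non-unital $A(T)$ --- while your conjugate-averaging of the polynomials is just a variant of the paper's extraction of the real and imaginary parts $q_n(T)$, $r_n(T)$.
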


\begin{proof}
Let us firstly show that the statement holds for complex spaces. Indeed by the Puncture Neighborhood Theorem (see e.g. \cite[Theorem 19.4]{Muller}) $\lambda$ is an isolated point of $\sigma(T)$, so  the Riesz
projection $P$ corresponding to $\{\lambda\}$ belongs to $A$. Thus the finite rank operator $P$ is the limit of a sequence of
polynomials in $T$.

Now turning to the case of real $X$, let ${Z}$ be the complexification of ${X}$,
i.e. ${Z} = X\oplus X$  supplied with the structure of complex space
by the formula
$$(\alpha+i\beta)z =
\alpha z + \beta Jz,$$
where  $J$ is the operator on
$Z$ acting by the rule
\[
J(x\oplus y) = (-y)\oplus x.
\]
 In matrix form
 \[ J =
\begin{pmatrix}
0 & -1\\
1 & 0
\end{pmatrix}.
\]
For each operator $K$ on $X$, the
operator $K\oplus K$ on $Z$ is complex-linear. The operator $T\oplus
T$ has the same spectrum and essential spectrum as $T$, and $\lambda$ is its eigenvalue. By the above, there is a sequence of polynomials with complex coefficients
 $$p_{n}(t) = \sum
_{k=1}^{N_{n}}(\alpha_{nk}+i\beta_{nk})t^{k}$$
such that the sequence $p_n(T\oplus T)$ tends to a non-zero finite rank operator $W$.
So
\[
p_{n}(T\oplus T) = \sum_{k=1}^{N_{n}}(\alpha_{nk}1+\beta_{nk}J)
\begin{pmatrix}
T^{k} & 0\\
0 & T^{k}%
\end{pmatrix}
= \sum_{k=1}^{N_{n}}
\begin{pmatrix}
\alpha_{nk}T^{k} & -\beta_{nk}T^{k}\\
\beta_{nk}T^{k} & \alpha_{nk}T^{k}%
\end{pmatrix}
=
\begin{pmatrix}
q_{n}(T) & r_{n}(T)\\
-r_{n}(T) & q_{n}(T)
\end{pmatrix}
,
\]
where $q_{n}$ and $r_{n}$ are polynomials with real coefficients. It follows
that
\[
W =
\begin{pmatrix}
W_{1} & -W_{2}\\
W_{2} & W_{1}%
\end{pmatrix}
\]
and $q_{n}(T) \to W_{1}$, $r_{n}(T) \to W_{2}$. Since at least one of
operators $W_{1}, W_{2}$ is non-zero, $A(T)$ contains a non-zero
finite rank operator.
\end{proof}

\bigskip

{\bf The proof of Theorem \ref{L4N}}.
 Set
 $$F = \{T^*\in A^*: \|T\|_e < 1\}$$
  and fix $\varepsilon \in (0, \frac{1}{10})$. Suppose firstly that $Fy$ is dense in $X^*$, for each non-zero $y\in X^*$. Then the same is true for $\varepsilon Fy$. Choose $y_0\in X^*$ with $\|y_0\| = 3$ and denote by $S$  the ball $\{y: \|y-y_0\| \le 2\}$.

So for every $y\in S$, there is $T^*_y\in \varepsilon F$ with $\|T^*_yy - y_0\|<1$. By the definition of  $F$, $T_y = R_y+K_y$ where  $\|R_y\| < \varepsilon$, $K_y \in \mathcal{K}(X)$.  Thus $T^*_y = R^*_y + K^*_y$ and
$$\|K^*_yy - y_0\| \le \|T^*_yy-y_0\| + \|R^*_yy\| <  1 + \varepsilon\|y\| \le  1 + 5\varepsilon.$$
   Now the compactness of  $K_y$ implies that there is a neighborhood  $V_y$  of  $y$ in the (relative) weak* topology $\tau$ of $S$,  such that $\|K^*_yw - y_0\| < 1 +5\varepsilon$, for all $w\in V_y$. Therefore $\|T^*_yw-y_0\|< 1+5\varepsilon + 5\varepsilon < 2$. In other words $T^*_y$ maps $V_y$ to $S$.

The sets $V_y$, $y\in S$, form a covering of   $S$; since $S$ is $\tau$-compact there is a finite subcovering $\{V_{y_i}: 1\le i\le n\}$. Let $\{\varphi_i: 1\le i\le n\}$ be a unity partition related to the covering $\{V_{y_i}: 1\le i\le n\}$. We define a map $\Phi: S \to S$ by
$$\Phi(y) = \sum_{i=1}^n\varphi_i(y)T^*_{y_i}(y).$$
 By Tichonov's Theorem, $\Phi$ has a fixed point $z\in S$. Thus $Mz = z$, where $M = \sum_{i=1}^n\varphi_i(z)T^*_{y_i}$. Since the set $\varepsilon F$ is convex, $M\in \varepsilon F$.

It follows that 1 is an eigenvalue of $M$  exceeding $\|M\|_e$; by Lemma \ref{ComEss}, $A^*$ contains a non-zero finite-rank operator. So the same is true for $A$.

It remains to consider the case that $Fz$ is not dense in  $X^*$,  for some $z\in X^*$. Clearly we may assume that $Fz \neq \{0\}$, because in this case $A^*z = \{0\}$ whence $\omega_{x,z}$ is a resolving functional, for each $x\in  X^{**}$.

 Let $V=\overline{Fz}$. It is a closed convex proper subset of $X^*$, so by the Bishop - Phelps Theorem  \cite{BPh}, there are  $0\neq y \in V$ and $0\neq x \in X^{**}$ with $(x,y) = \sup\{(x,w): w\in V\}$   (in the case of complex scalars it is important that $e^{it}V = V$, for all $t\in \mathbb{R}$, see  \cite{BPh1}).  Since $F^2\subset F$, we have that $Fy\subset V$, so $(x,T^*y) \le (x,y)$, for $T^*\in F$. Therefore $(x,T^*y)\le \|T\|_e(x,y)$, for all $T\in A$.
$\blacksquare$

\medskip


\medskip

{\bf The proof of Theorem \ref{L4}}.  Suppose that a) does not hold: $A$ has no resolving functionals. Then, by Theorem \ref{L4N}, $A$ contains a non-zero finite rank operator $T_0$. If $A$ had an invariant subspace $L$ then for any $x\in L$, $y\in L^{\bot}$, the functional $\omega_{x,y}$ would be resolving for $A$. So $A$ is transitive. If $\mathbb{K} = \mathbb{C}$ this contradicts to the assumption that $A$ is not dense in $\mathcal{B}(H)$ (see \cite{Barnes}, or a more general result in \cite{Lom0}). Thus $\mathbb{K}=\mathbb{R}$.

Let $Y = T_0X$ and let $B$ be the restriction of the algebra $T_0AT_0$ to $Y$. Then $B$ is an irreducible algebra of operators on an $n$-dimensional real space. There is a well known classification of such algebras (see e.g. \cite{KShT}) which implies that each of them is isomorphic to either

    $M_n(\mathbb{R})$ or $M_{n/2}(\mathbb{R})\otimes \mathbb{C}$, or
    $M_{n/4}(\mathbb{R})\otimes {\mathbb{H}}$
         where $\mathbb{C}$ and $\mathbb{H}$ are respectively the real algebras of complex numbers or of quaternions. It follows that $B$ contains idempotents, so the same is true for $T_0AT_0$ and therefore for $A$.

$\blacksquare$.

\begin{corollary}\label{complex} Let $X$ be a complex Banach space. The only WOT-closed subalgebra of $\mathcal{B}(X)$ that has no resolving functionals is $\mathcal{B}(X)$ itself.
\end{corollary}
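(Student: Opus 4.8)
The plan is to argue by contradiction: suppose $A$ is a WOT-closed subalgebra of $\mathcal{B}(X)$ with no resolving vector functional, and show $A = \mathcal{B}(X)$. First I would invoke Theorem \ref{L4}: since $A$ has no resolving functional, condition (a) fails, so condition (b) must hold unless $A$ is actually WOT-dense in $\mathcal{B}(X)$. But $\mathbb{K} = \mathbb{C}$ here, so (b) — which explicitly requires $\mathbb{K} = \mathbb{R}$ — cannot hold. Hence the hypothesis of Theorem \ref{L4} (that $A$ is not WOT-dense) must be the thing that fails, i.e. $A$ is WOT-dense in $\mathcal{B}(X)$. Since $A$ is WOT-closed, this forces $A = \mathcal{B}(X)$.

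For the converse direction I should check that $\mathcal{B}(X)$ itself genuinely has no resolving functional, so that the characterization is not vacuous. A resolving functional $\omega_{x,y}$ would satisfy $|(x, T^*y)| \le \|T\|_e (x,y)$ for all $T \in \mathcal{B}(X)$. Taking $T$ to be a rank-one operator $u \otimes v$ (so $T^*y = (y, \text{something}) \cdot$, arranged so that $(x, T^*y)$ is a nonzero multiple of $(x,y)$ or can be made large), one gets a contradiction because $\|T\|_e = 0$ for finite-rank $T$ while $(x, T^*y)$ can be made nonzero: indeed choose $u, v$ with $(u, y) \ne 0$ and $(x, v) \ne 0$, then $T = v \otimes u$ gives $T^*y = (u,y) v$... wait, let me just say: since finite-rank operators have zero essential norm, a resolving functional would have to annihilate $\mathcal{F}(X)$ in the sense $(x, T^*y) = 0$ for all finite-rank $T$, which is impossible for nonzero $x \in X^{**}$, $y \in X^*$ (pick a rank-one $T$ detecting both). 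So $\mathcal{B}(X)$ has no resolving functional, confirming it is a (in fact the) example.

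The only subtlety, and the place to be careful, is the logical bookkeeping around Theorem \ref{L4}: that theorem is stated for algebras that are \emph{not} WOT-dense, concluding (a) or (b); the contrapositive I actually need is that an algebra with neither (a) nor (b) must be WOT-dense. Over $\mathbb{C}$, (b) is automatically excluded, so "no resolving functional" already excludes (a), giving WOT-density directly. I expect no real obstacle here — the content is entirely in Theorem \ref{L4} (and behind it Theorem \ref{L4N} and the Barnes/Lomonosov transitivity results already cited in the proof of Theorem \ref{L4}); the corollary is just a clean restatement in the complex case, where the exceptional real phenomenon (finite-rank idempotents in transitive algebras) does not arise.
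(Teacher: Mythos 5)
Your argument is correct and is essentially the paper's own (implicit) proof: the corollary is an immediate consequence of Theorem \ref{L4}, since over $\mathbb{C}$ alternative (b) is excluded, so a WOT-closed algebra without resolving functionals must be WOT-dense and hence all of $\mathcal{B}(X)$. Your added check that $\mathcal{B}(X)$ itself admits no resolving functional (finite-rank operators have zero essential norm, and a rank-one operator can be chosen with $(x,T^*y)\neq 0$) is sound, though the paper does not spell it out.
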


\medskip

In the case of real spaces the situation is different. It was proved in the recent work of E.Kissin, V.S.Shulman and Yu.V.Turovskii \cite{KShT} that in a real separable Hilbert space $H$ there is a continuum of pairwise non-similar weakly closed transitive algebras containing non-zero finite rank operators.

All these algebras has no resolving functionals: if $f= \omega_{x,y}$ is a resolving functional for a transitive algebra $A\subset \mathcal{B}(H)$, then, by (\ref{ineq2}), $(Tx,y) = 0$  for all $T\in A\cap \mathcal{F}(H)$. So the ideal  $J = A\cap \mathcal{F}(H)$ of $A$ has invariant subspaces and therefore the same is true for  $A$ --- a contradiction.

\section{Main results }

     In this section $X$  is a real Banach space (complex spaces are considered as real ones).

   Let us say that an element $a$  of a unital real normed algebra is {\it positive}, if $$\|1-\varepsilon a\|\le 1+o(\varepsilon), \text{ for } \varepsilon\searrow 0.$$

   Furthermore $a$ is {\it real}, if $a^2$ is positive.

   To see that all selfadjoint operators on a real Hilbert space $H$ are real, note that the map $T\mapsto T_c$ from $\mathcal{B}(H)$ to $B(H_c)$ is $\mathbb{R}$-linear, involution-preserving and isometric, so if $T\in \mathcal{B}(H)$ is selfadjoint then
    $$\|1-\varepsilon T^2\| = \|1-\varepsilon (T_c)^2\| < 1, \text{ if } 0<\varepsilon<1/\|T\|^2.$$

    It is not difficult to check that Hermitian operators in complex Banach spaces (defined by the condition $\|\exp(itT)\| = 1$, for $t\in \mathbb{R}$) are real. Indeed, in this case
    $\|\cos(tT)\| = \|\Re(\exp(itT))\| \le 1$ whence
    $$\|1-\frac{t^2}{2}T^2\| = \|\cos tT\| - \sum_{n=2}^{\infty}(-1)^n\frac{t^{2n}}{n!}\| \le 1 + o(t^2);$$
    denoting $\frac{t^2}{2}$ by $\varepsilon$, we obtain that $\|1-\varepsilon T^2\|\le 1+o(\varepsilon).$
    \medskip
    In passing we obtain a similar criterion of reality for operators on real Banach spaces:
    \medskip

    if $\|\cos(tT)\|\le 1$, for all $t\in \mathbb{R}$, then $T$ is real.
\medskip

    Clearly all involutions and all nilpotents of order 2 satisfy the latter condition, so  these operators are also real.

     \medskip

   An operator $T\in \mathcal{B}(X)$ is {\it essentially real}, if $\pi(T)$ is a real element of the Calkin algebra (recall that by $\pi: \mathcal{B}(X)\to \mathcal{C}(X)$ we denote the natural epimorphism).

   Thus $T$ is essentially real if
   $$\|1-\varepsilon T^2\|_e \le 1+o(\varepsilon), \text{ when }\varepsilon\searrow 0$$

   If $T$ is an essentially selfadjoint operator on a real Hilbert space then it is essentially real. Indeed
   $$\|1-\varepsilon T^2\|_e = \|1-\varepsilon \pi(T)^2\| = \|1-\varepsilon \pi(T)^*\pi(T)\| = \|\pi(1-\varepsilon T^*T)\| \le \|1-\varepsilon T^*T\| \le 1,$$
   if $\varepsilon<\|T\|^{-2}$.

\begin{theorem}\label{LB} If $A$ is a commutative algebra of essentially real operators on a Banach space $X$, then there is a closed subspace of $X^*$, invariant for the algebra $A^*= \{T^*: T \in A\}$.
\end{theorem}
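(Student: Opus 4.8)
The plan is to combine Theorem~\ref{L4N} with Lemma~\ref{squares}, applying the latter to the \emph{dual} algebra $A^{*}$ acting on $X^{*}$; this is what makes the conclusion come out for $X^{*}$ rather than $X$. We may assume $X$ is infinite-dimensional (the customary setting) and $A\neq\{0\}$. Put $B=\overline{A+\mathbb{R}1}$, the norm-closed unital commutative algebra generated by $A$ and the identity, so that $A\subseteq B\subseteq\mathcal{B}(X)$.

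First dispose of the ``finite-rank'' case. If $B$ contains a non-zero finite-rank operator $F$, then, $B$ being commutative, $\ker F$ is a closed $B$-invariant (hence $A$-invariant) subspace; it is proper (as $F\neq0$) and non-zero (as $F$ has finite rank while $\dim X=\infty$), so $(\ker F)^{\perp}\subseteq X^{*}$ is a non-trivial closed $A^{*}$-invariant subspace and we are done. Otherwise $B$ contains no non-zero finite-rank operator, and Theorem~\ref{L4N} provides a resolving vector functional for $B$: non-zero $x\in X^{**}$, $y\in X^{*}$ with $|(x,T^{*}y)|\le\|T\|_{e}(x,y)$ for all $T\in B$. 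Putting $T=1$ forces $(x,y)\ge0$; if $(x,y)=0$ then $(x,T^{*}y)=0$ for all $T\in A$, so $\overline{A^{*}y+\mathbb{R}y}$ is a non-trivial closed $A^{*}$-invariant subspace lying inside $\ker x$ --- done again. So assume $(x,y)>0$.

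Now convert essential reality into positivity on the dual side. Fix $T\in A$; then $1-\varepsilon T^{2}\in B$, so, using $(1-\varepsilon T^{2})^{*}=1-\varepsilon(T^{*})^{2}$, the resolving inequality gives
$$\bigl|(x,y)-\varepsilon\,(x,(T^{*})^{2}y)\bigr|\ \le\ \|1-\varepsilon T^{2}\|_{e}\,(x,y)\ \le\ \bigl(1+o(\varepsilon)\bigr)(x,y)\qquad(\varepsilon\searrow0).$$
Dividing by $(x,y)>0$, retaining only the upper estimate, and letting $\varepsilon\searrow0$ forces $(x,(T^{*})^{2}y)\ge0$ (the quantity is real, $X$ being a real space). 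With $S=T^{*}$ this reads: $A^{*}$ is a commutative algebra of operators on the real Banach space $X^{*}$ for which $(S^{2}y,x)\ge0$ for all $S\in A^{*}$, where $y\in X^{*}$ is the vector and $x\in(X^{*})^{*}=X^{**}$ the functional. This is exactly the hypothesis of Lemma~\ref{squares}, whose conclusion is a non-trivial closed $A^{*}$-invariant subspace of $X^{*}$.

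The one point requiring care --- and the place I expect the (minor) difficulty to sit --- is that $A$, hence $A^{*}$, need not be unital, while Lemma~\ref{squares} is stated for unital algebras. In its proof unitality enters only (i)~to guarantee that the closed convex hull $K$ of $\{S^{2}y:S\in A^{*}\}$ is non-zero and (ii)~to replace $T$ by $1+\alpha T$. For (ii), observe that $K$ remains invariant under each $(1+\alpha S)^{2}$, $S\in A^{*}$, since $(1+\alpha S)S'=S'+\alpha SS'\in A^{*}$ for $S'\in A^{*}$, whence $(1+\alpha S)^{2}\,((S')^{2}y)=((1+\alpha S)S')^{2}y\in K$. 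For (i), if $K=\{0\}$ then $S^{2}y=0$ for all $S\in A^{*}$, so by commutativity $S^{2}$ annihilates the closed $A^{*}$-invariant subspace $\overline{A^{*}y}$: if $A^{*}y=\{0\}$ then $\mathbb{R}y$ is $A^{*}$-invariant; if $\overline{A^{*}y}$ is proper it is itself the subspace sought; and if $\overline{A^{*}y}=X^{*}$ then $S^{2}=0$ for every $S\in A^{*}$, so $A$ has zero multiplication and $\overline{AX}$ is a proper non-zero $A$-invariant subspace. With these adjustments the argument of Lemma~\ref{squares} goes through, and the proof is complete.
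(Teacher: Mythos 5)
Your proof is correct and follows essentially the same route as the paper's: Theorem \ref{L4N} applied to the norm-closed unital algebra generated by $A$, essential reality fed in through the operators $1-\varepsilon T^2$ to obtain $(x,(T^{2})^{*}y)\ge 0$, and Lemma \ref{squares} applied to $A^{*}$ acting on $X^{*}$, with the finite-rank and $(x,y)=0$ cases disposed of separately. The only divergence is that where the paper simply declares ``we may assume $A$ is unital'', you keep $A^{*}$ non-unital and patch the proof of Lemma \ref{squares} (together with the degenerate cases) by hand --- a sensible precaution, since adjoining the identity need not preserve essential reality --- but this is a refinement of the same argument rather than a genuinely different one.
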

\begin{proof} Note that the set of all positive elements of a Banach algebra is a convex cone. Moreover this cone is closed. Indeed let $a = \lim_{n\to \infty}a_n$ where all $a_n$ are positive. If  $a$ is not positive then there is a sequence $\varepsilon_n \to 0$  and a number   $C> 0$, such that $\|1 - \varepsilon_n a\| > 1 + C\varepsilon_n$ for all  $n$. Taking $k$ with  $\|a-a_k\| < C/2$, we get that $\|1 - \varepsilon_na_k\| > 1 + C\varepsilon_n - \|a-a_k\|\varepsilon_n > 1 + (C/2)\varepsilon_n$, a contradiction to positivity of  $a_k$.

 It follows that the set of real elements is closed. Applying this to $\mathcal{C}(X)$ we see that the set of essentially real operators is closed. This allows us to assume that the algebra $A$ is closed. Obviously we may assume also that $A$ is unital.

If $A$ contains a non-zero finite rank operator $K$ then, by commutativity, the finite dimensional subspace $KX$ is invariant for $A$. So we may restrict by the case that $A$ has trivial intersection with $\mathcal{F}(X)$. By Theorem  \ref{L4N}, there are vectors $x_0\in X^{**}, y_0\in X^*$, such that the condition (\ref{ineq2}) holds. Clearly if $(x_0,y_0) = 0$ then $A^*$ has an invariant subspace (namely $\overline{A^*y_0}$), so we may assume that $(x_0,y_0) >0$.

Therefore, for $T\in A$ and $\varepsilon \searrow 0$,
    $$(x_0,(1-\varepsilon (T^2)^*)y_0) \le \|1-\varepsilon T^2\|_e(x_0,y_0) \le (1+o(\varepsilon))(x_0,y_0),$$
whence
$$-\varepsilon(x_0, (T^2)^*y_0)  \le o(\varepsilon)(x_0, y_0).$$

 It follows that $(x_0,(T^2)^*y_0)\ge 0$, because $(x_0,y_0)> 0$.
Now it remains to apply Lemma \ref{squares} to the algebra $A^*$.
\end{proof}

\begin{corollary}\label{LBref} A commutative algebra of essentially real operators on a reflexive real Banach space has an invariant subspace.
\end{corollary}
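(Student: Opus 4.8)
The plan is to deduce this from Theorem \ref{LB} by a standard duality argument, the reflexivity of $X$ being exactly what makes the passage from a statement about $X^*$ back to a statement about $X$ work.

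First I would apply Theorem \ref{LB} to the given commutative algebra $A$ of essentially real operators, obtaining a non-trivial closed subspace $L\subseteq X^*$ with $A^*L\subseteq L$. That the subspace supplied by the proof of Theorem \ref{LB} is genuinely non-trivial should be recorded: if $A\cap\mathcal{F}(X)\neq\{0\}$ one gets a finite-dimensional $A$-invariant subspace of $X$, whose annihilator is a proper non-zero $A^*$-invariant subspace of $X^*$; otherwise one gets $\overline{A^*y_0}$ with $y_0\neq 0$, which is proper by the argument already carried out inside the proof of Lemma \ref{squares}.

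Next I would form the preannihilator $L_\perp=\{x\in X:(x,y)=0\text{ for all }y\in L\}$, a norm-closed subspace of $X$, and check two things. Invariance: for $T\in A$, $x\in L_\perp$ and $y\in L$ we have $(Tx,y)=(x,T^*y)$ and $T^*y\in L$, hence $(Tx,y)=0$; so $T(L_\perp)\subseteq L_\perp$ for every $T\in A$. Non-triviality: here reflexivity is used. Since $X$ is reflexive, the weak* and weak topologies on $X^*$ agree, so the norm-closed (hence weakly closed) subspace $L$ is weak*-closed, and therefore $(L_\perp)^\perp=L$. Consequently $L_\perp=\{0\}$ would force $L=X^*$ and $L_\perp=X$ would force $L=\{0\}$, both excluded; so $L_\perp$ is a non-trivial closed invariant subspace for $A$, which is the assertion.

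There is no real obstacle in this deduction; the only point that needs care, and the only place reflexivity genuinely enters, is the non-triviality of $L_\perp$: for a non-reflexive $X$ a proper norm-closed subspace of $X^*$ can have trivial preannihilator (e.g.\ $c_0\subset\ell^\infty=(\ell^1)^*$), so one really cannot dispense with the hypothesis $X=X^{**}$ when converting the dual conclusion of Theorem \ref{LB} into an invariant subspace of $X$ itself.
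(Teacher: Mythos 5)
Your deduction is correct and is exactly the standard duality passage the paper intends (it states Corollary \ref{LBref} as an immediate consequence of Theorem \ref{LB}): take the $A^*$-invariant subspace $L\subset X^*$, pass to its preannihilator, and use reflexivity (weak* = weak closedness, so $(L_\perp)^\perp=L$) to guarantee non-triviality. Your care about the non-triviality of the subspace produced by Theorem \ref{LB} is consistent with the paper's standing convention that invariant subspaces are closed and non-trivial, so nothing further is needed.
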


Since the algebra generated by a commutative family of essentially selfadjoint operators on a Hilbert space consists of essentially selfadjoint (and therefore, essentially real) operators we get the following result:

\begin{theorem}\label{M} Any commutative family of essentially selfadjoint operators on a real Hilbert space has an invariant subspace.
\end{theorem}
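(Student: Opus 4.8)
The plan is to reduce Theorem \ref{M} to Corollary \ref{LBref} via Theorem \ref{LB}, exactly as the sentence preceding the statement suggests. Let $\mathcal{T}$ be a commutative family of essentially selfadjoint operators on a real Hilbert space $H$, and let $A$ be the (not necessarily closed) algebra generated by $\mathcal{T}$ together with the identity. First I would observe that $A$ is commutative, since $\mathcal{T}$ is, and that every element of $A$ is again essentially selfadjoint: a polynomial with real coefficients in finitely many commuting essentially selfadjoint operators $T_1,\dots,T_n$ has the form $P(T_1,\dots,T_n)$, and $\pi(P(T_1,\dots,T_n)) = P(\pi(T_1),\dots,\pi(T_n))$ is selfadjoint in the Calkin algebra because each $\pi(T_i)$ is selfadjoint and they commute, so the real polynomial expression is again selfadjoint.

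Next I would invoke the computation already carried out in the excerpt: an essentially selfadjoint operator $T$ on a real Hilbert space satisfies $\|1-\varepsilon T^2\|_e \le \|1-\varepsilon T^*T\| \le 1$ for $0 < \varepsilon < \|T\|^{-2}$, hence is essentially real. Applying this to each element of $A$, the algebra $A$ is a commutative algebra of essentially real operators on $H$. By Theorem \ref{LB}, $A^*$ has a closed invariant subspace $L \subset H^*$. Since $H$ is a Hilbert space (reflexive), we may instead apply Corollary \ref{LBref} directly to $A$ to get a closed invariant subspace for $A$ itself in $H$; either route works, but using Corollary \ref{LBref} is cleanest because it already packages the reflexivity argument. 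A subspace invariant for $A$ is in particular invariant for every $T \in \mathcal{T}$, which is the desired conclusion.

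The one point that needs a word of care is the passage from the (possibly non-closed) algebra generated by $\mathcal{T}$ to something to which Theorem \ref{LB} or Corollary \ref{LBref} applies; but this is handled inside the proof of Theorem \ref{LB}, where it is shown that the set of essentially real operators is norm-closed, so one may pass to the norm closure $\overline{A}$, which remains commutative and essentially real, before running the argument. Thus no genuine obstacle remains: the theorem is a direct corollary, and the only thing to verify carefully is the (routine) fact that products and real-linear combinations of commuting essentially selfadjoint operators are essentially selfadjoint, which follows from $\pi$ being a homomorphism and $\pi(\mathcal{T})$ being a commuting family of selfadjoint elements of the Calkin algebra. I do not expect any step to be hard; the content has been front-loaded into Theorem \ref{LB} and Lemma \ref{squares}.
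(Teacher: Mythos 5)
Your proposal is correct and follows essentially the same route as the paper, which obtains Theorem \ref{M} by noting that the algebra generated by the commuting family consists of essentially selfadjoint (hence essentially real) operators and then applying Corollary \ref{LBref} on the reflexive space $H$. Your additional care about closing the algebra and about real polynomials in commuting selfadjoint elements of the Calkin algebra being selfadjoint is exactly the (implicit) content of the paper's one-line deduction.
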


It seems desirable to strengthen Theorem \ref{M} by replacing commuting essentially selfadjoint operators with essentially commuting essentially selfadjoint, or at least essentially commuting selfadjoint ones. But this is impossible:  any two selfadjoint compact operators essentially commute but not necessarily have a common invariant subspaces.

\medskip

Returning to individual criteria of non-transitivity let us denote by $E(X)$  the class of all operators $T$ on a real Banach space $X$ such that each  polynomial $P(T)$ of $T$ is essentially real.

\begin{corollary}\label{LB2} If $T\in E(X)$ then the operator $T^*$ has an invariant subspace. As a consequence, if $X$ is reflexive then any operator $T\in E(X)$ has an invariant subspace.
\end{corollary}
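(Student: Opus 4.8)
The statement to prove is Corollary~\ref{LB2}: if $T \in E(X)$, then $T^*$ has an invariant subspace, and if moreover $X$ is reflexive, then $T$ itself has one. The plan is to reduce this to Theorem~\ref{LB} applied to the right algebra. The natural candidate is $A = A(T)$, the norm-closed algebra generated by $T$ (including the unit). This algebra is commutative, and by the hypothesis $T \in E(X)$ every polynomial $P(T)$ is essentially real; I would then check that this property passes to the norm closure, so that every element of $A$ is essentially real. This last point is exactly the closedness of the cone of real elements of the Calkin algebra, which was established at the start of the proof of Theorem~\ref{LB} (the set of positive elements is a closed convex cone, hence the set of real elements is closed); applying it inside $\mathcal{C}(X)$ shows $\pi(A)$ consists of real elements, i.e. every element of $A$ is essentially real.

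Having verified that $A$ is a commutative algebra of essentially real operators, Theorem~\ref{LB} gives a closed subspace of $X^*$ that is invariant for $A^* = \{S^* : S \in A\}$. In particular it is invariant for $T^*$, which is the first assertion. For the second assertion, when $X$ is reflexive I would invoke the standard annihilator duality: a closed subspace $L \subseteq X^*$ invariant under $T^*$ has preannihilator $L_\perp = {}^\perp L \subseteq X$ which is closed, non-trivial (since $L$ is proper and non-zero and $X$ is reflexive so the annihilator correspondence is an order-reversing bijection between closed subspaces of $X$ and of $X^*$), and invariant under $T$ — because $(T^* L \subseteq L) \Rightarrow (T\, {}^\perp L \subseteq {}^\perp L)$. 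This yields an invariant subspace for $T$.

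I should be a little careful about the trivial cases so that the invariant subspaces produced are genuinely non-trivial and closed: in Theorem~\ref{LB} the conclusion is a \emph{closed} invariant subspace for $A^*$, and the only way the argument could degenerate is if $A$ (equivalently $A^*$) is already non-transitive for cheap reasons — e.g. $A$ contains a non-zero finite rank operator, in which case $A(T)$ already has a finite-dimensional invariant subspace and one transfers by annihilators as above. All of this is handled inside the proof of Theorem~\ref{LB} itself, so I would simply cite it rather than redo the case analysis.

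The main obstacle, such as it is, is conceptual rather than technical: recognizing that ``every polynomial in $T$ is essentially real'' plus ``the real elements form a norm-closed set'' is precisely what is needed to feed $A(T)$ into Theorem~\ref{LB}. Once that observation is made, the proof is just the two-line reduction plus the annihilator argument for reflexivity; there are no genuinely hard estimates to carry out, since the analytic content ($\|1 - \varepsilon a\| \le 1 + o(\varepsilon)$ behaviour and its closedness) has already been extracted in the proof of Theorem~\ref{LB}.
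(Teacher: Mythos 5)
Your proof is correct and follows essentially the route the paper intends: apply Theorem~\ref{LB} (and, in the reflexive case, the duality argument behind Corollary~\ref{LBref}) to the commutative algebra generated by $T$, whose elements are essentially real by the definition of $E(X)$ together with the closedness of the set of real elements of $\mathcal{C}(X)$. The only minor redundancy is the closure step: one may apply Theorem~\ref{LB} directly to the (not necessarily closed) algebra of polynomials in $T$, since its proof performs that reduction internally.
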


Atzmon, Godefroy and Kalton \cite{AGK}  introduced the class $S(X)$ of all operators on $X$, satisfying the condition
\begin{equation}\label{S}
\|P(T)\|_e \le \sup\{|P(t)|: t\in \Omega\}, \text{ for each polynomial } P,
\end{equation}
 where $\Omega$ is a compact subset of $\mathbb{R}$. It was proved   in \cite{AGK} that  all operators in $(S)$ have invariant subspaces if $X$ is reflexive (in general their adjoints have invariant subspaces). The usefulness of this result was cogently demonstrated by S. Grivaux \cite{SofGr} who applied it to the proof of non-transitivity of a wide class of tridiagonal operators in sequence spaces.

 It is not difficult to see that $S(X)\subset E(X)$.  Indeed if $T\in S(X)$ then
 $$\|1-\varepsilon P(T)^2\|_e \le \sup\{|1-\varepsilon P(t)^2|: t\in \Omega\} \le 1, \text{ for sufficiently small } \varepsilon.$$
 So the above mentioned result of \cite{AGK} follows from Corollary \ref{LB2}.

\bigskip

\section{Concluding remarks and problems}

This version of the paper differs from the first one (\cite{LS1}) in several ways.  Among several improvements and additions we mention that Theorem \ref{L4} now is obtained as a consequence of Theorem \ref{L4N} which has a more convenient formulation which does not differ the real and the complex fields of scalars. Furthermore in the proof of our main result, Theorem \ref{M}, we use the quadratic function instead of the exponential one (this trick  has appeared in our review \cite[Proof of Theorem 3.9]{LS2}; the same idea was proposed two years later by Godefroy \cite{G}). Here we present it separately in Lemma \ref{squares}.

Note also that the whole text is much more detailed and (hopefully) transparent than \cite{LS1}.

\bigskip

Now we will formulate several unsolved problems.

\bigskip

Q1. (the main question):  can the word {\it real} in Theorem \ref{M} be replaced by {\it complex}?
\bigskip

Q2. Does every essentially selfadjoint operator $T$ on a real Hilbert space $H$ have {\it hyperinvariant} subspace (that is a subspace invariant for all operators commuting with $T$?
\bigskip

 Note that the positive answer for Q2 would imply the positive answer for Q1. Indeed let $H$ be complex, $H_r$ the underlying real Hilbert space and $J$  the operator of multiplication by $i$ on $H_r$. An essentially selfadjoint operator $T$ on $H$ can be considered as an essentially selfadjoint operator $T_r$ on $H_r$ that commute with $J$. A hyperinvariant subspace $L$ of $T_r$ must be invariant for $T_r$ and $J$, so it is a subspace of $H$, invariant for $T$.
 \bigskip

Q3.  Does every essentially selfadjoint operator $T$ on a real Hilbert space $H$ have {\it conditionally hyperinvariant} subspace (that is a subspace invariant for all {\it essentially selfadjoint} operators commuting with $T$)?
\bigskip

Q4. Does every essentially normal operator on real Hilbert space have invariant subspace?

A more weak version: the same question for a compact perturbation of a normal operator.

\bigskip

Q5. Does every essentially skew self-adjoint operator ($T+T^*\in \mathcal{K}(H)$) on a real Hilbert space have invariant subspace?

\medskip

\medskip

\noindent Dept of Math.\\
\noindent Kent State University\\
\noindent Kent OH 44242, USA \\
lomonoso@mcs.kent.edu

\medskip

\noindent Dept of Math.\\
\noindent Vologda State University\\
\noindent Vologda 160000, Russia\\
shulman.victor80@gmail.com


\begin{thebibliography}{99}



\bibitem{A} Atzmon A., Reducible representations of abelian groups, Ann.Inst.Fourier 51(2001), 1407-1418.


\bibitem{AG} Atzmon A., Godefroy G., An application of the smooth variational principle to the existence of non-trivial invariant subspaces, C.R.Acad.Sci. Paris, Ser. I\; Math., 322:2(2001), 151 - 156.

\bibitem{AGK} Atzmon A., Godefroy G., Kalton N.J., Invariant subspaces and the exponential map, Positivity 8(2004), no 2, 101-107.


\bibitem{Barnes} Barnes B.A., Density theorems for algebras of operators and annihilator Banach algebras, Michigan Math. J.  19(1972), 149-155.

\bibitem{BPh} Bishop E., Phelps R.R., Support cones in Banach spaces and their applications, Adv. Math. 13(1974), 1-19.

\bibitem{GK} Gohberg I.Ts., Krein M.G., On completely continuous operators with spectrum concentrated in zero, Soviet Math. Dokl., 128 (1959),  227-230.

 \bibitem{G}  Godefroy G., Invariant subspaces: some minimal proofs, The Mathematical Legacy of Victor Lomonosov, 131-138, Advances in Analysis and Geometry, 2020, De Gruyter, Berlin/Boston.

\bibitem{SofGr} Grivaux S., Invariant subspaces for tridiagonal operators, Bull. sci. math. 126(2002), 681-691.

 \bibitem{KShT} Kissin E., Shulman V.S., Turovskii Yu.V., On transitive operator algebras on real Banach spaces, (submitted)

\bibitem{LSc} Lindstrom  M., Shluchtermann  G., Lomonosov's technique and Burnside's theorem, Canad. Math. Bull. 43(2000), 87-89.

\bibitem{MSL} Livsic M.S., On the spectral resolution of linear non-selfadjoint operators, Mathem. Sbornik, 34(76), (1954), 145-198 (Amer. Math. Soc. Transl. (2) 5 (1957), 67-114).

\bibitem{Lom0} Lomonosov V.I., Invariant subspaces for operators commuting with compact operators, Functional Analysis and Applications, 7(1973), 213-214.

 \bibitem{Lom3} Lomonosov V., An extension of Burnside's theorem to infinite-dimensional spaces, Israel J. Math. 75(1991), 329-339.

  \bibitem{Lom2} Lomonosov V., On real invariant subspaces of bounded operators with compact imaginary part, Proc. Amer. Math. Soc. 115(1992), 775-777.

\bibitem{Lom4} Lomonosov V.I., Positive functionals on general operator algebras, J.Math.Anal.Appl., 245(2000), 221-224.


\bibitem {LS1}Lomonosov V.I., Shulman V.S., \emph{Invariant subspaces for
commuting operators in real Banach space}, Functional Analysis and its
Applications \textbf{52} (2018), 65-69 (in Russian).

(archive version, 2016:  https://arxiv.org/pdf/1612.05821.pdf)

\bibitem {LS2}   Lomonosov V.I., Shulman V.S.,   Halmos problems and related results in the theory
of invariant subspaces, Russian Mathematical Surveys, 73(2018), 1-61, translated from: Uspekhi Matematicheskih Nauk, 73(2018), 35-98, in Russian.


\bibitem{M1} Macaev V.I., On volterra operators which are perturbations of selfadjoint ones, Soviet Math. Dokl., 139(1961),  810-814.

\bibitem{M2} Macaev V.I., On a class of completely continuous operators, Soviet Math. Dokl., 139(1961),  548-552.


\bibitem{Muller} Muller V., Spectral Theory of Linear Operators and Spectral Systems in Banach Algebras, Operator Theory Advances and Applications, vol. 139, Basel-Boston-Berlin.

\bibitem{BPh1} Phelps R.R., Lecture Notes in Pure and Applied Mathematics 136(1992).

\bibitem{RR} Radjavi H., Rosenthal P., "Invariant subspaces"\;, Springer Verlag, 1970.

\bibitem{Sah}  Sahnovich L.A., On reduction of a non-selfadjoint operators to the triangular form, Izvestia Vuzov. Mathematika, 1959,  141-149.

\bibitem{Sch} Schwartz J., Subdiagonalization of operators in Hilbert space with compact imaginary part, Comm. Pure Appl. Math. 15(1962), 159-172.

\bibitem{Sc} Simonic A., An extension of Lomonosov's techniques to non-compact operators, Trans. Amer. Math. Soc.
    348(1996), 975-995.







\end{thebibliography}
\end{document}